\newcommand{\be}{\begin{equation}}
\newcommand{\ee}{\end{equation}}
\newcommand{\ben}{\begin{eqnarray*}}
\newcommand{\een}{\end{eqnarray*}}
\newtheorem{examp}{\sc Example}
\newtheorem{remk}{\sc Remark}
\newtheorem{corol}{\sc Corollary}
\newtheorem{lemma}{\sc Lemma}
\newtheorem{theorem}{\sc Theorem}
\newtheorem{defn}{\sc Definition}
\newcommand{\bt}{\begin{theorem}}
\newcommand{\et}{\end{theorem}}
\newcommand{\bl}{\begin{lemma}}
\newcommand{\el}{\end{lemma}}
\newcommand{\bed}{\begin{defn}}
\newcommand{\eed}{\end{defn}}
\newcommand{\brem}{\begin{remk}}
\newcommand{\erem}{\end{remk}}
\newcommand{\bex}{\begin{examp}}
\newcommand{\eex}{\end{examp}}
\newcommand{\bcl}{\begin{corol}}
\newcommand{\ecl}{\end{corol}}
\newcommand{\newsection}[1]{\setcounter{equation}{0} \setcounter{theorem}{0}
\setcounter{lemma}{0} \setcounter{defn}{0} \setcounter{remk}{0}
\setcounter{corol}{0} \setcounter{examp}{0}  \section{#1}}
\newcommand{\NI}{\noindent}
\newcommand{\raro}{\rightarrow}
\newcommand{\vsp}{\vskip 0.5em}
\theoremstyle{definition}
\theoremstyle{remark}
\numberwithin{equation}{section}
\numberwithin{theorem}{section}
\numberwithin{lemma}{section}
\begin{document}
	\title{\large {\bf{\sc Iterative Descent Method for Generalized Leontief Model}}}
	
	\author{R. Jana$^{a, 1}$, A. K. Das$^{b,2}$, Vishnu Narayan Mishra$^{c,3}$\\
		\emph{\small $^{a}$Jadavpur University, Kolkata}\\
		\emph{\small $^{b}$Indian Statistical Institute, Kolkata}\\
		\emph{\small $^{c}$Indira Gandhi National Tribal University, Amarkantak}\\
		\emph{\small $^1$Email: rwitamjanaju@gmail.com}\\
		\emph{\small $^2$Email: akdas@isical.ac.in}\\
		\emph{\small $^3$Email: vishnunarayanmishra@gmail.com}}
	\date{}
	
	\maketitle
	
	\begin{abstract}
		\noindent In this paper we consider generalized Leontief model. We show that under certain condition the generalized Leontief model is solvable by iterative descent method based on infeasible interior point algorithm. We prove the convergence of the method from strictly positive starting point. A numerical example is presented to demonstrate the performance of the algorithm. 
		
		\vsp
		\NI{\bf Keywords:} Leontief model, generalized Leontief model, linear complementarity problem, vertical linear complementarity problem, infeasible interior point algorithm.
	\end{abstract}
	
	\footnotetext[3]{Corresponding author}
	
	\newsection{Introduction}  
	The purpose of Leontief model \cite{leontief1} is to find the interrelationship among goods and services for different sectors of the economy. Leontief model considers production of items within some industries where number of industries and number of products are equal. In other words the model indicates a balance between demand and supply. The model is very useful to analyze the national economy of any sector as each of the industries uses input from itself and other industries to produce a particular product. 
	
	Leontief model is classified as open model and closed model (see \cite{berman}). Open model deals with finding the production level based on external demand whereas the closed model deals only with internal demand. The input-output model has wide applications in the area of regional economics \cite{regionaleco}, international trade \cite{Barker}, multi facility inventory systems \cite{veinott}. The Leontief model describes a facilitated view of an economical situation. The target of this model is to state the exact level of production for each of various types of services or goods. Suppose $b_j$ denotes the units available or required at industry $j,$ and $a_{jk}$ be the technical coefficients representing units of output of sector $j$ required per unit output of sector $k.$ The net output $b_j$ is normally called the final demand of the $j$th good. Suppose the basic input-output equations are given as
	\be \label{cl}
	x_j = \sum_{k = 1}^{n} a_{jk}x_k + b_j
	\ee
	then we can find $x_j$ which is the output from industry $j.$ 
	
	There is an assumption that each industry or sector produces only one output. In case if an industry produces more than one output, then the analysis is done by aggregation. Several variants of the input-output model are available in the literature \cite{variants}. Leontief model considers single technology. Ebiefung et al. \cite{ebiefung} introduced generalized Leontief model by considering multiple technologies. In this paper we consider an infeasible interior point method in line with Kojima et al. \cite{kojima} and show that generalized Leontief model can be solved using this method by controlling step lengths. 
	
	The paper is organized as follows. Section 2 presents notations and some basic results related to the Leontief model and generalized Leontief model. In section 3, we propose an infeasible interior point method in line with Kojima et al. \cite{kojima} to solve generalized Leontief model. We prove the convergence of the algorithm. A numerical example is given in section 4 to illustrate the performance of the proposed algorithm. 
	
	\section{Preliminaries}
	\noindent We consider matrices and vectors with real entries. Any vector $z \in R^n$ is a column vector, $z^{T}$ denotes the transpose of $z$. $z_k$ denotes the $k$th component of the vector $z$. $e$ denotes the vector of all $1$. $\|z\|$ denotes the euclidean norm of the vector $z$. For any matrix $A \in R^{n \times n}$, $A^{T}$ denotes its transpose. $R^{n}$ denotes real $n$-dimensional space. $R^{n}_{+}$ and $R^{n}_{++}$ donote the nonnegative and positive orthant in $R^{n}$ respectively. 
	
	\subsection{Leontief Model}
	Suppose there are $n$ industries and let $b_j$ be the demand for output $j$ where $j = 1, 2, \cdots, n$. $b_j \geq 0$ implies the requirement at the end of the period and $b_j \leq 0$ represents the number of units available at the beginning of the period. Generalizing the Leontief model (\ref{cl}), the requirement that each output at industry $j$ can be written as,
	\begin{equation} \label{leontief}
	x_j \geq b_j + \sum_{k = 1}^{n} a_{jk} x_k.
	\end{equation}
	Again Equation \ref{leontief} can be rewritten as 
	\be \label{f1}
	x \geq 0
	\ee
	\be \label{f2}
	-b + (I-A)x \geq 0
	\ee
	\be \label{complementary}
	x^T[-b + (I-A)x] = 0
	\ee
	where $A = (a_{jk})$ and $I$ is the identity matrix of order $n$. Hence Equation \ref{f1}, Equation \ref{f2} and Equation \ref{complementary} can be written as LCP$(q, M)$ by taking $q = -b$ and $M = (I-A).$ The complementary condition implies either $x_j > 0,$ then $(-b + Mx)_j = 0$ i.e. production of output $j$ at industry $j$ meets demand exactly or $x_j = 0$ then from Equation \ref*{leontief}, we get $-b_j > \sum_{k = 1}^{n} a_{jk} x_k$ i.e. no production is necessary at industry $j$.
	
	Now we consider the following result on Leontief model which will be used in the subsequent section.
	\begin{theorem} \cite{berman} \label{berman}
		In an open Leontief model with input matrix $T$ and let $A = I - T.$ Then the following statements are equivalent:
		\begin{enumerate}
			\item The model is feasible.
			\item The model is profitable.
			\item $A$ is non singular $M$ matrix.
		\end{enumerate} 
	\end{theorem}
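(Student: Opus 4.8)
The plan is to prove the equivalence by treating the spectral condition $\rho(T) < 1$ as the hub, where $T \geq 0$ is the input matrix and $A = I - T$. I read \emph{feasible} as the existence of an output vector $x \geq 0$ with $(I-T)x > 0$ (the economy can produce a strictly positive net output) and \emph{profitable} as the dual statement that there is a price vector $p \geq 0$ with $(I - T^T)p > 0$; I will use the standard fact that a $Z$-matrix of the form $A = I - T$ with $T \geq 0$ is a nonsingular $M$-matrix exactly when $\rho(T) < 1$.

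First I would prove $(3) \Rightarrow (1)$ and $(3) \Rightarrow (2)$. If $A$ is a nonsingular $M$-matrix then $\rho(T) < 1$, so the Neumann series $\sum_{k=0}^{\infty} T^k$ converges to $A^{-1}$; since every $T^k \geq 0$, we obtain $A^{-1} \geq 0$. Choosing any $b > 0$, the vector $x = A^{-1} b \geq 0$ satisfies $(I - T)x = b > 0$, giving feasibility. Because $\rho(T^T) = \rho(T) < 1$, the same argument applied to $A^T = I - T^T$ yields a nonnegative price vector with strictly positive value added, giving profitability.

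Next I would close the loop with $(1) \Rightarrow (3)$, and symmetrically $(2) \Rightarrow (3)$. Observe first that any $x \geq 0$ with $(I-T)x > 0$ is automatically strictly positive, because $x_i > (Tx)_i \geq 0$ for every $i$. Writing $\lambda = \max_i (Tx)_i / x_i < 1$ we have $Tx \leq \lambda x$, and iterating with $T \geq 0$ gives $T^k x \leq \lambda^k x$; comparing against the standard basis and using $\rho(T) = \lim_k \|T^k\|^{1/k}$ yields $\rho(T) \leq \lambda < 1$, so $A = I - T$ is a nonsingular $M$-matrix. The profitability case is identical after transposing. Assembling the two blocks establishes $(1) \Leftrightarrow (2) \Leftrightarrow (3)$.

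The step I expect to be the main obstacle is the spectral bound in the last block for a general, possibly reducible, nonnegative $T$: one must pass from the componentwise domination $T^k x \leq \lambda^k x$ to a genuine bound on $\rho(T)$ without appealing to irreducibility. I would do this either through Gelfand's formula as sketched above, or, alternatively, via the Perron--Frobenius theorem for nonnegative matrices, which guarantees a nonnegative left eigenvector $w$ of $T$ for $\rho(T)$; pairing $w$ with $(I-T)x > 0$ gives $(1-\rho(T))\, w^T x > 0$ and hence $\rho(T) < 1$ directly. A secondary point requiring care is simply the bookkeeping that matches the economic notions of feasibility and profitability to the precise vector inequalities used throughout.
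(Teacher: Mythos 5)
Your proof is correct, and at the top level it shares the paper's skeleton: feasibility is encoded as the existence of $x \geq 0$ with $Ax > 0$, this is tied to the nonsingular $M$-matrix property, and profitability is dispatched by transpose symmetry (the paper invokes ``$A$ is a nonsingular $M$-matrix iff $A^T$ is,'' you use $\rho(T^T) = \rho(T)$). The genuine difference is in depth: the paper's proof is citation-style, treating as black boxes the two standard Berman--Plemmons characterizations --- that among $Z$-matrices the existence of $x \geq 0$ with $Ax > 0$ characterizes nonsingular $M$-matrices, and that such matrices satisfy $A^{-1} \geq 0$ --- whereas you prove both from scratch with $\rho(T) < 1$ as the hub. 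Your Neumann-series argument $\sum_k T^k = A^{-1} \geq 0$ handles one direction, and for the converse your observation that $x \geq 0$ with $(I-T)x > 0$ forces $x > 0$ strictly, hence $Tx \leq \lambda x$ with $\lambda = \max_i (Tx)_i/x_i < 1$, is exactly the right lever: iterating to $T^k x \leq \lambda^k x$ and invoking Gelfand's formula (or, in your alternative, pairing a Perron--Frobenius left eigenvector $w \geq 0$ with $(I-T)x > 0$ to get $(1-\rho(T))\,w^T x > 0$, where $w^T x > 0$ precisely because $x$ is strictly positive) closes the loop without any irreducibility hypothesis --- a subtlety you correctly identify and resolve, and which the paper never has to confront because it cites the characterization wholesale. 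What your route buys is a self-contained argument valid for arbitrary, possibly reducible, nonnegative input matrices $T$; what the paper's buys is brevity, at the cost of outsourcing exactly the steps you verify. One small bookkeeping remark: the paper phrases feasibility as solvability of $Ax = d$ with $x \geq 0$ for a chosen demand $d > 0$, which is equivalent to your formulation $(I-T)x > 0$ by taking $d = Ax$ in one direction and $x = A^{-1}d$ in the other, so your reading of the economic terms matches the source.
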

	\begin{proof}
		If the model is feasible then by choosing demand vector $d > 0,$ it follows that there exists a $x \geq 0$ with $Ax = d > 0.$ This condition characterizes the property of non singular $M$ matrices. Conversely, suppose $A$ is non singular $M$ matrix, then $A^{-1} \geq 0.$ Thus $Ax = d$ has a nonnegative solution $x = A^{-1}d$ for each $d \geq 0.$ The equivalence of the model is profitable and $A$ is nonsingular $M$ matrix follows from the fact that $A$ is non singular $M$ matrix if and only if $A^T$ is non singular $M$ matrix.
	\end{proof}
	
	There are several solution procedure of Leontief model developed in the recent times. Dantzig \cite{Dantzig} presented a method to solve this model using substitution technique.
	
	\subsection{Vertical Linear Complementarity Problem}
	Consider a rectangular matrix $N$ of order $m \times k$ with $m \geq k.$ Suppose $N$ is partitioned row-wise into $k$ blocks in the form
	\begin{center}
		$N$ =  $\left[\begin{array}{r}
		N^1 \\
		N^2 \\
		\vdots\\
		N^k
		\end{array}\right]$
	\end{center}
	where each $N^j = (n^{j}_{rs}) \in R^{m_j \times k}$ with $\sum_{j = 1}^{k} m_j = m.$ Then $N$ is called vertical block matrix of type $(m_1, m_2, \cdots, m_k).$ If $m_j = 1, \forall j = 1, 2, \cdots, k,$ then $N$ is a square matrix. The concept of vertical block was introduced by Cottle and Dantzig \cite{cd} in connection with the generalization of linear complementarity problem. Cottle-Dantzig's generalization involves a system $w - Nz = q,\ w \geq 0, \ z \geq 0,$ where $N \in R^{m \times k},$ $m \geq k$ and the variable $w_1, w_2, \cdots, w_m$ are partitioned into $k$ nonempty sets $\mathcal{S}_{j}, j = 1, 2, \cdots, k.$ Let $\mathcal{T}_{j} = \mathcal{S}_{j} \cup \{z_j\}, \ j = 1, 2, \cdots, k.$ This problem is to find a solution pair $(w, z), w \in R^m, \ z \in R^k$ of the system such that atleast one member of each set $\mathcal{T}_{j}$ is non basic. The formal statement of this problem is as follows:
	
	Given an $m \times k$ vertical block matrix $A$ of type $(m_1, m_2, \cdots, m_k)$ and a given vector $q \in R^m$ where $m = \sum_{j = 1}^{k} m_j,$ find $w \in R^m$ and $z \in R^k$ such that 
	\begin{center}
		$w - Az = q, \ w \geq 0, \ z \geq 0$\\
		$z_j\prod_{i = 1}^{m_j} w_{i}^{j} = 0,$ for $j = 1, 2, \cdots, k$.
	\end{center}
	This generalization is known as \textit{vertical linear complementarity problem} and this problem is denoted as VLCP$(q, A)$. In recent years, a number of applications of the vertical linear complementarity problem have been reported in the literature. Cottle-Dantzig algorithm \cite{cd} is well known for solving vertical linear complementarity problem. For pivotal algorithms see \cite{icmc}. 
	
	\subsubsection{Results in VLCP Theory}
	We give some definitions and results which will be required in the next section.
	\begin{defn}
		Let $A$ be a vertical block matrix of type $(m_1, m_2, \cdots, m_k).$ A submatrix of size $k$ of $A$ is called representative submatrix if its $j$th row is drawn from $j$th block $A^j$ of $A.$ 
	\end{defn}
	\begin{remk}
		A vertical block matrix $A$ of type $(m_1, m_2, \cdots, m_k)$ has atmost $\prod_{j=1}^{k}m_j$ distinct representative submatrices.
	\end{remk}
	\begin{defn}
		A vertical block matrix $A$ of type $(m_1, m_2, \cdots, m_k)$ is called vertical block $P(P_0)$ matrix, if all its representative submatrices are $P(P_0)$ matrices.
	\end{defn}
	
	Mohan et al. \cite{MNS} consider a vertical block matrix $A$ of type $(m_1, m_2, \cdots, m_k)$ where $m_j$ is the size of the $j$th block. Construct a matrix $M$ by copying $A_{.j}$ i.e. $j$th column of $A$, $m_j$ times for $j = 1, 2, \cdots, k$. This leads to a square matrix $M$ of order $m \times m$ from $A$. $M$ is said to be equivalent square matrix of $A$. Now we state the following theorem which will be required for our proposed algorithm.
	\begin{theorem} \cite{MNS} \label{mns}
		Given the VLCP$(q, A),$ let $M$ be the equivalent square matrix of $A.$ Then VLCP$(q, A)$ has a solution if and only if LCP$(q, M)$ has a solution.
	\end{theorem}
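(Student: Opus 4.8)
My plan is to exploit the column-duplication structure that defines the equivalent square matrix $M$. I write the LCP variable as $\hat{z} \in R^m$ and partition it, exactly as $w$, into $k$ blocks $\hat{z}^j \in R^{m_j}$ with components $\hat{z}_i^j$. Since $M$ is obtained from $A$ by repeating the $j$th column $A_{\cdot j}$ exactly $m_j$ times, every column of $M$ lying in the $j$th block equals $A_{\cdot j}$, and therefore
\begin{equation*}
M \hat{z} = \sum_{j=1}^{k} A_{\cdot j} \Big( \sum_{i=1}^{m_j} \hat{z}_i^j \Big) = A z, \qquad \text{where } z_j := \sum_{i=1}^{m_j} \hat{z}_i^j .
\end{equation*}
This aggregation identity is the bridge between the two problems: it shows that an LCP feasible pair $(w, \hat{z})$ and a VLCP feasible pair $(w, z)$ share the same slack $w = q + M\hat{z} = q + Az$ as soon as the block sums of $\hat{z}$ are collected into the coordinates of $z$.

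First I would prove the direction VLCP $\Rightarrow$ LCP. Suppose $(w, z)$ solves VLCP$(q, A)$. I build $\hat{z} \geq 0$ block by block: if $z_j = 0$ I put $\hat{z}^j = 0$; if $z_j > 0$, then the VLCP complementarity $z_j \prod_{i=1}^{m_j} w_i^j = 0$ forces some component $w_{i_0}^j = 0$, and I place the whole mass there, $\hat{z}_{i_0}^j = z_j$ and $\hat{z}_i^j = 0$ otherwise. By construction $\sum_{i} \hat{z}_i^j = z_j$, so the aggregation identity gives $w = q + M\hat{z}$, while $w^T \hat{z} = 0$ holds because in each block the only possibly nonzero coordinate of $\hat{z}$ sits on a vanishing coordinate of $w$. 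Hence $(w, \hat{z})$ solves LCP$(q, M)$.

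Conversely, given a solution $(w, \hat{z})$ of LCP$(q, M)$, I set $z_j := \sum_{i=1}^{m_j} \hat{z}_i^j \geq 0$. The identity again yields $w - Az = q$ with $w, z \geq 0$. It remains to verify the product complementarity: if $z_j > 0$, then some $\hat{z}_{i_0}^j > 0$, and $w^T \hat{z} = 0$ forces $w_{i_0}^j = 0$, so $\prod_{i=1}^{m_j} w_i^j = 0$ and hence $z_j \prod_{i} w_i^j = 0$; the case $z_j = 0$ is immediate. Thus $(w, z)$ solves VLCP$(q, A)$, completing the equivalence.

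The step I expect to require the most care is the complementarity bookkeeping rather than the algebra. The essential point, used in both directions, is that for a nonnegative $w$ the product $\prod_{i=1}^{m_j} w_i^j$ vanishes precisely when at least one factor vanishes. This equivalence is exactly what lets me either concentrate the aggregated mass $z_j$ on a zero-slack coordinate (in VLCP $\Rightarrow$ LCP) or deduce a vanishing slack from a positive aggregated mass (in LCP $\Rightarrow$ VLCP); everything else reduces to the routine linear identity $M\hat{z} = Az$.
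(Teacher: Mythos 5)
Your proof is correct: the aggregation identity $M\hat{z} = Az$ with $z_j = \sum_{i=1}^{m_j} \hat{z}_i^j$, combined with concentrating the mass $z_j$ on a zero coordinate of $w^j$ in one direction and reading off a vanishing slack from $w^T\hat{z} = 0$ in the other, closes both implications without gaps. Note that the paper states Theorem \ref{mns} as a cited result from \cite{MNS} and supplies no proof of its own, so there is no in-paper argument to compare against; your construction is the standard column-duplication correspondence underlying that reference, and your version is self-contained.
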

	\subsection{Generalized Leontief Model}
	Ebiefung and Kostreva \cite{ebiefung} extended Leontief model considering multiple technology. Further they formulated the generalized Leontief model as vertical linear complementarity problem. The formulation is given below.
	
	Consider there are total $m_j \geq 1$ number of different technologies and corresponding output with atleast one $m_j > 1$ for $j = 1, 2, \cdots, n.$ The generalized Leontief model can be written as
	\be \label{glm}
	x_j \geq b_{i}^{j} + \sum_{k = 1}^{n}a_{ik}^{j}x_k, i = 1, 2, \cdots m_j.
	\ee
	For $j = 1, 2, \cdots, n$, $b_{i}^{j}$ denotes the demand for output $i$ at industry $j.$ If $b_{i}^{j} \geq 0$ then $b_{i}^{j}$ represents quantity of goods $i$ to be produced by industry $j$ and if $b_{i}^{j} \leq 0$ then $b_{i}^{j}$ represents the number of units already available at the beginning of the period to satisfy demand.  
	
	Consider a matrix $E$ of order $m \times k$ as follows:
	\begin{center}
		$E$ = $\left[\begin{array}{rrrr}
		e^1 & 0 & \cdots & 0 \\
		0 & e^2 & \cdots & 0 \\
		\vdots &  &  & \vdots\\
		0 & 0 & \cdots & e^k
		\end{array}\right]$
	\end{center}
	where $e^j$ be a column vector of dimension $m_j \times 1$ with each component 1. Now by setting $m = \sum_{j = 1}^{n}m_j,$ we obtain $E$ of dimension $m \times n$ with $m \geq n.$ Consider
	\begin{center}
		$b^j = (b_{i}^{j})_{i = 1}^{m_j},$\\
		$A^j = (a_{ik}^{j})_{i, k = 1}^{m_j, n}$.
	\end{center}
	We write
	\begin{center}
		$b$ =  $\left[\begin{array}{c}
		b^1  \\
		b^2 \\
		\vdots\\
		b^n
		\end{array}\right],$
		\quad
		$A$ =  $\left[\begin{array}{c}
		A^1 \\
		A^2 \\
		\vdots\\
		A^n
		\end{array}\right].$
	\end{center}
	Note that $A$ is a matrix of order $m \times n.$ We write $N = E - A.$ Here $N$ is a vertical matrix of order $m \times n$ and of type $(m_1, m_2, \cdots, m_n).$ Then from the generalized Leontief model \ref{glm}, we write
	\begin{center}
		$x \geq 0$\\
		$Nx \geq b$\\
		and $x_j\prod_{i = 1}^{m_j}(N^jx - b^j)_i = 0,$ for $j = 1, 2, \cdots, n.$
	\end{center}
	Note that complementary condition states minimum cost requirement. Ebiefung \cite{ebiefung} extended the Chandrasekharan algorithm to gave an approach for solving generalized Leontief model.  
	\section{Results} 
	In this section we propose an iterative descent method based on infeasible interior point algorithm to solve a generalized Leontief model. We define the feasible region of the LCP$(q, M)$ as FEA$(q, M)$ 
	\begin{center}
		FEA$(q, M) = \{(z, w) \in R^{n} : z \geq 0,\; w \geq 0,\; w = q + Mz\}$
	\end{center} 
	and interior of the set FEA$(q, M)$ as
	\begin{center}
		FEA$_{+}(q, M) = \{(z, w) \in R^{n} : z > 0,\; w > 0\}.$
	\end{center} 
	The algorithm moves from the current iterate $(z^k, w^k)$ to the solution of the LCP$(q, M)$ by introducing $(z^{k+1}, w^{k+1})$ defined as 
	\begin{center}
		$z^{k+1} = z^k + \alpha_{k} d_{z}^{k}$\\
		$w^{k+1} = w^k + \alpha_{k} d_{w}^{k}$
	\end{center}
	\NI where $\alpha_k$ is the suitable step length of the algorithm. The generated sequence $\{(z^k, w^k)\}$ is required to satisfy $z^k > 0,\ w^k > 0.$ Now we define central trajectory as the set of solutions $(z, w) > 0$ to the system of equations
	\begin{center}
		$w = q + Mz,$\\
		$ZWe = \mu e,$
	\end{center}
	for every $\mu > 0.$ Here the neighborhood $\mathcal{N}$ of the central trajectory be defined as
	
	$$\mathcal{N} = \{(z, w) > 0 : z_iw_i \geq \gamma \frac{z^Tw}{n} (i = 1, 2, \cdots, n), z^Tw \geq \gamma^{'} \|w - Mz - q\| \ \text{or} \ \|w - Mz - q\| \leq \epsilon \}$$
	
	where $\epsilon > 0,$ $\gamma^{'} > 0$ and $\gamma \in (0, 1).$ Starting from a strictly positive point $(z^0, w^0)$ the algorithm iteratively generates a sequence $\{(z^k, w^k)\}.$ We define a non-linear system with non-negative constraints
	\be \label{newton}
	F(z, w) =  \left[\begin{array}{c}
		-Mz + w - q \\
		ZWe - \mu e 
	\end{array}\right] = 0
	\ee
	where $Z =$ diag$(z_i),$ $W =$ diag$(w_i)$ and $e$ is the vector of all 1's. We obtain
	\begin{center}
		$F^{'}(z, w)$ =  $\left[\begin{array}{rr}
		-M &  I \\
		W &  Z
		\end{array}\right].$
	\end{center}
	We apply Newton method to find the search direction $(d_{z}^{k}, \ d_{w}^{k})$ for the algorithm. Now we solve 
	\begin{center}
		$\left[\begin{array}{rr}
		-M & I \\
		W^k & Z^k
		\end{array}\right] 
		\left[\begin{array}{r}
		d_{z}^{k}\\
		d_{w}^{k}
		\end{array}\right]$ = $\left[\begin{array}{r}
		Mz^k - w^k + q \\
		-Z^kW^ke + \mu_k e
		\end{array}\right]$
	\end{center}
	where $\sigma \in [0, 1)$ and $\mu_k = \sigma \frac{{z^{k}}^{T}w^{k}}{n}.$ Hence
	\begin{center}
		$-Md_{z}^{k} + d_{w}^{k} = Mz^k - w^k + q$\\
		$W^kd_{z}^{k} + Z^{k}d_{w}^{k} = -Z^kW^ke + \mu_k  e.$	
	\end{center}
	Therefore we get
	\be \label{1}
	d_{z}^{k} = -(Z^k M + W^k)^{-1}(Z^kq + Z^kMz^k - \mu_{k} e)\
	\ee
	\be \label{2}
	d_{w}^{k} = M(z^k + d_{z}^{k}) - w^k + q.
	\ee
	Select the step length $\alpha_{k}$ suitably such that the algorithm generates strictly positive points $(z^k, w^k > 0)$ in every step. 
	
	We consider the merit function as given in \cite{simantiraki}. 
	\begin{center}
		$\|\phi(z, w)\| = \sqrt{\|w - Mz - q\|^{2} + \|ZWe\|^{2}}.$
	\end{center} 
	We show that value of the merit function at each step reduces and algorithm stops when $\|\phi(z, w)\| \leq \delta$ for some pre-determined $\delta > 0.$  Now we state our propose algorithm for solving generalized Leontief model given in (\ref{glm}).
	\subsection{Algorithm}
	\begin{description}
		\item[Step I:] Let $M$ be an equivalent square matrix of the vertical matrix $A$ using the Theorem \ref{mns} of \cite{MNS}.
		
		\item[Step II:] Let $k = 0$. Let $(z_0, w_0) > 0,$ $\delta > 0, \ \beta \in (0, 1/2], \ \gamma \in (0, 1), \ \sigma = 0.5$. Compute the value of merit function $\phi(z^k, w^k).$
		
		\item[Step III:] If $\phi(z^k, w^k) \leq \delta$, STOP and $(z^k, w^k)$ is an approximate solution to the LCP$(q, M).$ Otherwise go to Step IV.
		
		\item[Step IV:] Let $\mu_k = \sigma \frac{{z^k}^Tw^k}{n}$ and find $(d_{z}^{k}, d_{w}^{k})$ from Equation (\ref{1}) and (\ref{2}).
		
		\item[Step V:] Compute $\alpha_k$ so that 
		
		\begin{center}
			$(z^{k+1}, w^{k+1}) = (z^k, w^k) + \alpha_k(d_{z}^{k}, d_{w}^{k}) \in \mathcal{N},$\
			
			$\phi(z^{k+1}, w^{k+1}) - \phi(z^k, w^k) \leq \alpha_k \beta \nabla \phi(z^k, w^k)^T (d_{z}^{k}, d_{w}^{k}).$
		\end{center}
		\item[Step VI:] Set $k = k+1$ and go to the Step I.
	\end{description}
	
	To process generalized Leontief model, we assume the matrix $MZ + W$ is non-singular at each step of the algorithm for any diagonal matrices $Z$ and $W$ with strictly positive elements.
	
	\begin{remk}
		Note that we assume the non-singularity of the matrix $MZ + W$ to find the solution of generalized Leontief model. However in case of solving Leontief model, similar assumption is not required.
	\end{remk}
	
	Now we prove the following theorem.
	\begin{theorem}
		In a Leontief model, det$(MZ + W) \neq 0$ for any positive diagonal matrices $Z$ and $W$.
	\end{theorem}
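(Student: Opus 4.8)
The plan is to strip the two positive diagonal matrices out of the problem and reduce everything to an intrinsic property of $M = I - A$. Since $Z$ is a positive diagonal matrix it is invertible, so I can factor
\[ MZ + W = \bigl(M + WZ^{-1}\bigr)Z, \]
whence $\det(MZ + W) = \det\bigl(M + WZ^{-1}\bigr)\det Z$. As $\det Z \neq 0$, it suffices to show that $M + D$ is non-singular for every \emph{positive} diagonal matrix $D$, the relevant choice here being $D = WZ^{-1}$, which is positive diagonal because both $W$ and $Z^{-1}$ are. This is the whole point of the reduction: the particular $Z,W$ no longer matter, only that $D$ ranges over positive diagonal matrices.

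Next I would exploit the Leontief structure $M = I - A$, where $A = (a_{jk}) \geq 0$ is the matrix of technical coefficients. Suppose, for contradiction, that $(M + D)x = 0$ for some $x \neq 0$. Writing $M + D = (I + D) - A$ gives $(I + D)x = Ax$, that is, $x = (I + D)^{-1} A x$. The diagonal matrix $(I + D)^{-1}$ has entries $1/(1 + d_i) \in (0,1)$, so the matrix $B := (I + D)^{-1} A$ is nonnegative and dominated entrywise by $A$, i.e. $0 \leq B \leq A$.

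Now I would bring in feasibility. By Theorem \ref{berman}, a feasible (equivalently, profitable) open Leontief model has $M = I - A$ a non-singular $M$-matrix; since $M = I - A$ with $A \geq 0$, this forces the Perron root to satisfy $\rho(A) < 1$. By monotonicity of the spectral radius on nonnegative matrices, $0 \leq B \leq A$ yields $\rho(B) \leq \rho(A) < 1$, so $1$ is not an eigenvalue of $B$. Then $x = Bx$ forces $x = 0$, contradicting $x \neq 0$. Hence $M + D$ is non-singular for every positive diagonal $D$, and consequently $\det(MZ + W) \neq 0$.

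The real content — and the place where a careless statement could fail — is the spectral bound, not the linear algebra: the bare fact $A \geq 0$ is \emph{not} enough, since for an unproductive model $\rho(A) \geq 1$ the determinant can vanish. The crux is therefore to make explicit that "Leontief model" here carries the feasibility hypothesis of Theorem \ref{berman}, delivering $\rho(A) < 1$. A hypothesis-light alternative, should one prefer to avoid the Perron root, is to note that a non-singular $M$-matrix is a $P$-matrix, hence a $P_0$-matrix, and then use the standard fact that $M + D$ is non-singular whenever $M$ is $P_0$ and $D$ is positive diagonal: if $(M+D)x = 0$ then $x_i (Mx)_i = -d_i x_i^2 < 0$ at every index with $x_i \neq 0$, contradicting the $P_0$ sign condition. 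Either route localizes all the difficulty in the same structural fact and leaves the rest routine.
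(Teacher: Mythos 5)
Your proof is correct, but it takes a genuinely different route from the paper's. The paper does not factor out $Z$; instead it passes through the $2n\times 2n$ Newton Jacobian: it notes that nonsingularity of $MZ+W$ is tied to nonsingularity of $\left[\begin{smallmatrix} -M & I \\ W & Z \end{smallmatrix}\right]$, uses Theorem \ref{berman} to conclude that the Leontief matrix $M=I-A$ is a nonsingular $M$-matrix and hence a $P$-matrix (in particular $P_0$), and then cites Lemma 4.1 of Kojima et al.\ \cite{Kojimainterior}, which characterizes nonsingularity of that block matrix for all positive diagonal $W,Z$ by $M\in P_0$. Your ``hypothesis-light'' alternative is essentially the paper's argument with the citation replaced by its one-line proof: the sign computation $x_i(Mx)_i=-d_ix_i^2<0$ after the reduction $MZ+W=(M+WZ^{-1})Z$ is exactly the content of the cited lemma, so that route is the same mathematics made self-contained (and arguably cleaner, since the paper's displayed implication between nonsingularity of $MZ+W$ and of the block matrix is stated in the less useful direction, whereas your factorization makes the reduction explicit and correct). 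Your primary route via Perron--Frobenius is the more distinctive one: writing $(M+D)x=0$ as $x=(I+D)^{-1}Ax$ and using $0\le B\le A$ with $\rho(B)\le\rho(A)<1$ exploits the concrete nonnegativity of the technology matrix rather than the abstract $P_0$ property; it buys elementarity (no LCP machinery, no external lemma) at the cost of being tied to the structure $M=I-A$, $A\ge 0$, while the $P_0$ argument extends verbatim to the generalized setting the paper cares about. You also deserve credit for making explicit that ``Leontief model'' must carry the feasibility hypothesis of Theorem \ref{berman} (equivalently $\rho(A)<1$); the paper uses this implicitly in asserting $M$ is a $P$-matrix, and as you note, without it the statement is simply false.
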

	\begin{proof}
		It is known that to for any two strictly positive $Z$ and $W$ the following is true:
		\begin{center}
			$(MZ + W)$ is non-singular $\implies$
			$\left[\begin{array}{rr}
			-M & I \\
			W & Z
			\end{array}\right]$
		\end{center} is non-singular. Now in LCP formulation of Leontief model, $M$ is a $P$-matrix by the Theorem \ref{berman}. Again by Lemma (4.1) by Kojima et al. \cite{Kojimainterior},
		$\left[\begin{array}{rr}
		-M & I \\
		W & Z
		\end{array}\right]$ is non-singular if and only if $M \in P_0.$ Hence det$(MZ + W) \neq 0$. \end{proof}
	\subsection{Convergence Analysis}
	In this section we show that the algorithm presented in the previous section converges to the solution of generalized Leontief model. 
	We define 
	\be \label{3.4}
	\mathcal{N^*} = \{(z, w) \in \mathcal{N} : z^Tw \geq \bar{\epsilon} , \|(z, w)\| \leq \omega^* \} \ee
	
	\NI where $\bar{\epsilon}=$ min$\{\epsilon, \gamma' \epsilon\}$ and $\omega^* > 0.$ Firstly the newton direction $(d_{z}^k, d_{w}^k)$ determined by the system of equations is a continuous function such that $(z^k, w^k) \in \mathcal{N^*}.$ As the matrix 
	\begin{center}
		$\left[\begin{array}{rr}
		-M & I \\
		W^k & Z^k
		\end{array}\right]$
	\end{center} 
	is non-singular for any $(z^k, w^k) \in \mathcal{N^*}.$ Hence the Newton direction $(d_z^k, d_w^k)$ is uniformly bounded for all $(z^k, w^k)$ over the set $\mathcal{N^*}.$ Hence we can find positive constants $\eta_1,$ $\eta_2$ such that $(d_z^k, d_w^k)$ computed at every iterations satisfies
	\be \label{bd1}
	|d^k_{z_i} d^k_{w_i} - \gamma {d^k_{z}}^Td^{k}_{w}/n| \leq \eta_1 \ee
	\be	\label{bd2}
	|{d^k_{z}}^Td^{k}_{w}| \leq \eta_2.
	\ee 
	Firstly we show that the generated sequence from the algorithm $\{z^k, w^k\}$ satisfies the following condition.
	\begin{lemma}
		The generated sequence $\{z^k, w^k\}$ by the algorithm satisfies
		\begin{center}
			\begin{enumerate}[(i)]
				\item $-Mz^{k+1} + w^{k+1} -q = (1 - \alpha_k)(-Mz^k + w^k -q)$
				
				\item ${z^{k}}^{T}d_{w}^{k} + {w^{k}}^{T}d_{z}^{k} = -(1 - {\sigma}){z^{k}}^{T}w^{k}.$
			\end{enumerate}
		\end{center}
	\end{lemma}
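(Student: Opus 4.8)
The plan is to verify both identities by direct substitution, using only the update rule $(z^{k+1}, w^{k+1}) = (z^k, w^k) + \alpha_k(d_z^k, d_w^k)$ together with the two equations that define the Newton search direction, namely
\[
-M d_z^k + d_w^k = M z^k - w^k + q, \qquad W^k d_z^k + Z^k d_w^k = -Z^k W^k e + \mu_k e .
\]
No structural machinery is needed; each part reduces to a short algebraic manipulation of these relations.

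For part (i), I would substitute $z^{k+1} = z^k + \alpha_k d_z^k$ and $w^{k+1} = w^k + \alpha_k d_w^k$ into the residual $-Mz^{k+1} + w^{k+1} - q$ and regroup, writing it as $(-Mz^k + w^k - q) + \alpha_k(-M d_z^k + d_w^k)$. The first Newton equation gives $-M d_z^k + d_w^k = M z^k - w^k + q = -(-Mz^k + w^k - q)$, so the coefficient of $\alpha_k$ is exactly $-1$ times the residual $(-Mz^k + w^k - q)$. Collecting the two terms then collapses the expression to $(1 - \alpha_k)(-Mz^k + w^k - q)$, which is the claimed identity.

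For part (ii), the idea is to pre-multiply the second Newton equation by $e^{T}$. Since $Z^k$ and $W^k$ are diagonal, $e^{T} W^k = {w^k}^{T}$ and $e^{T} Z^k = {z^k}^{T}$, so the left-hand side becomes ${w^k}^{T} d_z^k + {z^k}^{T} d_w^k$. On the right-hand side, $e^{T} Z^k W^k e = {z^k}^{T} w^k$ and $e^{T}(\mu_k e) = n \mu_k$. Inserting the definition $\mu_k = \sigma\, {z^k}^{T} w^k / n$ yields $n \mu_k = \sigma\, {z^k}^{T} w^k$, and hence the right-hand side equals $-(1 - \sigma){z^k}^{T} w^k$, which is precisely identity (ii).

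Both parts are essentially bookkeeping, so I do not anticipate a genuine obstacle. The only point requiring care is in part (ii): one must recognize that left-multiplication by $e^{T}$ converts the diagonal-matrix products $W^k d_z^k$ and $Z^k d_w^k$ into the inner products ${w^k}^{T} d_z^k$ and ${z^k}^{T} d_w^k$, and then substitute the value of $\mu_k$ correctly so that the factor $(1-\sigma)$ emerges. Everything else follows by linearity.
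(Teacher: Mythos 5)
Your proposal is correct and follows essentially the same route as the paper: part (i) by substituting the update rule and invoking the first Newton equation, and part (ii) by left-multiplying the second Newton equation by $e^{T}$ and inserting $\mu_k = \sigma\,{z^k}^{T}w^k/n$. If anything, your treatment of part (ii) is slightly more careful than the paper's, which loosely says the result follows ``by restricting $\mu_k = \sigma {z^k}^{T}w^k$'' where the correct bookkeeping is $e^{T}(\mu_k e) = n\mu_k = \sigma\,{z^k}^{T}w^k$, exactly as you wrote.
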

	\begin{proof}
		$(i)$ From the Newton equation we get 
		\begin{center}
			$\left[\begin{array}{rr}
			-M & I \\
			W^k & Z^k
			\end{array}\right] 
			\left[\begin{array}{r}
			d_{z}^{k}\\
			d_{w}^{k}
			\end{array}\right]$ = $\left[\begin{array}{r}
			Mz^k - w^k + q \\
			-Z^kW^ke + \mu_k e
			\end{array}\right]$
		\end{center}
		where $\sigma \in [0, 1)$ and $\mu_k = \sigma \frac{{z^{k}}^{T}w^{k}}{n}.$ 
		Hence,
		\begin{center}
			$-Md_{z}^{k} + d_{w}^{k} = q + Mz^{k} -w^{k}$\\
			$-Md_{z}^{k} - Mz^{k} + d_{w}^{k} + w^{k} =q$\\
			$-M \alpha_k d_{z}^{k} - M \alpha_k z^k + \alpha_k d_{w}^{k} + \alpha_k w^k = \alpha_k q$\\
			$-M(z^{k+1} - z^k) - M \alpha_k z^k + (w^{k+1} - w^{k}) + \alpha_k w^{k}=\alpha_k q$\\
			$-Mz^{k+1} + w^{k+1} - q = -Mz^{k} + M \alpha_k z^{k} + w^{k} - \alpha_k w^{k} - q + \alpha_k q$\\
			$-Mz^{k+1} + w^{k+1} - q = (-Mz^{k} + w^{k} - q) - \alpha_k(-Mz^{k} + w^{k} - q)$\\
			$-Mz^{k+1} + w^{k+1} - q = (1 - \alpha_k)(-Mz^{k} + w^{k} - q).$
		\end{center} 
		
		$(ii)$ 2nd part follows form the Newton equation by restricting $\mu_k = \sigma {z^{k}}^{T} w^{k}.$ We have
		\begin{center}
			$W^kd_{z}^{k} + Z^{k}d_{w}^{k} = - Z^kW^ke + \mu_k e $\\
			${z^{k}}^{T}d_{w}^{k} + {w^{k}}^{T}d_{z}^{k} = - {z^{k}}^{T}w^{k} + \sigma {{z^k}^Tw^k}$\\
			${z^{k}}^{T}d_{w}^{k} + {w^{k}}^{T}d_{z}^{k} = -(1 - \sigma) {z^{k}}^{T}w^{k}.$
		\end{center}
	\end{proof}
	To specify the selection of suitable step length $\alpha_{k},$ we define real valued functions $g^{I}(\alpha)$ and $g^{II}(\alpha)$ as follows: 
	\begin{center}
		$g^{I}_{i}(\alpha) = z_{i}^{k+1}w_{i}^{k+1} - \gamma \frac{{z^{k+1}}^{T}w^{k+1}}{n}$
	\end{center}
	\begin{center}	
		$g^{II}(\alpha) = {z^{k+1}}^{T}w^{k+1} - \gamma \|w^k - Mz^k -q\|$
	\end{center}
	where $\gamma \in (0, 1).$ The next lemma will be required for the choice of $\alpha_k.$ The proof of the following two lemmas are in line with the idea given in \cite{Kojimainterior}. 
	\begin{lemma} \label{lemma 1}
		If $g^{I}_{i}(\alpha) = z_{i}^{k+1}w_{i}^{k+1} - \gamma \frac{{z^{k+1}}^{T}w^{k+1}}{n},$ then
		$g^{I}_{i}(\alpha) \geq \sigma(1- \gamma)(\bar{\epsilon}/n)\alpha - \eta_{1} \alpha^{2}$ $\forall$ $i.$
	\end{lemma}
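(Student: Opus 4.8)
The plan is to substitute the update rules $z^{k+1}=z^k+\alpha d_z^k$ and $w^{k+1}=w^k+\alpha d_w^k$ into the definition of $g^I_i(\alpha)$, expand everything as a quadratic polynomial in $\alpha$, and then bound the three coefficients separately. First I would write the componentwise expansion
$$z_i^{k+1}w_i^{k+1} = z_i^k w_i^k + \alpha\big(z_i^k d^k_{w_i} + w_i^k d^k_{z_i}\big) + \alpha^2 d^k_{z_i}d^k_{w_i},$$
and sum over $i$ to get ${z^{k+1}}^Tw^{k+1}$ as a quadratic in $\alpha$. The crucial simplification comes from the second block of the Newton system, whose $i$th component reads $w_i^k d^k_{z_i}+z_i^k d^k_{w_i}=-z_i^kw_i^k+\mu_k$ with $\mu_k=\sigma\,{z^k}^Tw^k/n$; summing this relation is exactly part (ii) of the preceding lemma and yields ${z^{k+1}}^Tw^{k+1}=(1-\alpha(1-\sigma)){z^k}^Tw^k+\alpha^2{d_z^k}^Td_w^k$.

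Substituting these into $g^I_i(\alpha)=z_i^{k+1}w_i^{k+1}-\gamma\,{z^{k+1}}^Tw^{k+1}/n$ and collecting powers of $\alpha$, the key algebraic step is to recognize that the $\alpha^0$ term together with a specific piece of the $\alpha^1$ term telescopes into the factored form
$$\Big(z_i^kw_i^k-\gamma\,\tfrac{{z^k}^Tw^k}{n}\Big)(1-\alpha).$$
After this regrouping the identity one is aiming for is
$$g^I_i(\alpha) = \Big(z_i^kw_i^k-\gamma\,\tfrac{{z^k}^Tw^k}{n}\Big)(1-\alpha) + \sigma(1-\gamma)\,\tfrac{{z^k}^Tw^k}{n}\,\alpha + \Big(d^k_{z_i}d^k_{w_i}-\gamma\,\tfrac{{d_z^k}^Td_w^k}{n}\Big)\alpha^2.$$

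It then remains to bound each term for $\alpha\in[0,1]$. The first term is nonnegative, since $1-\alpha\ge 0$ while $z_i^kw_i^k-\gamma\,{z^k}^Tw^k/n\ge 0$ because $(z^k,w^k)\in\mathcal{N}$. For the linear term I would invoke $(z^k,w^k)\in\mathcal{N^*}$, which gives ${z^k}^Tw^k\ge\bar{\epsilon}$, so this term dominates $\sigma(1-\gamma)(\bar{\epsilon}/n)\alpha$. Finally, the quadratic coefficient is bounded below by $-\eta_1$ directly from the uniform estimate (\ref{bd1}), giving the $-\eta_1\alpha^2$ contribution; combining the three bounds yields the claim. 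I expect the main obstacle to be not any individual estimate but the bookkeeping in the collect-and-regroup step: one must carry $\mu_k=\sigma{z^k}^Tw^k/n$ correctly through both the componentwise identity and its summed form, and then spot the exact split of the linear coefficient that produces the clean $(\,\cdot\,)(1-\alpha)$ factor. Once that factorization is in place, the sign condition from $\mathcal{N}$ and the lower bound from $\mathcal{N^*}$ finish the argument mechanically.
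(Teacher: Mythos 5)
Your proposal is correct and follows essentially the same route as the paper's proof: the identical expansion of $z_i^{k+1}w_i^{k+1}$ and ${z^{k+1}}^{T}w^{k+1}$ via the Newton system, the same regrouped identity $g^{I}_{i}(\alpha)=(1-\alpha)\bigl(z_{i}^{k}w_{i}^k - \gamma{z^k}^{T}w^{k}/n\bigr)+ \sigma (1 - \gamma)\alpha{z^k}^{T}w^k/n + \alpha^{2}\bigl(d_{z_{i}}^{k}d_{w_{i}}^{k} - \gamma {d_{z}^{k}}^Td_{w}^{k}/n\bigr)$, and the same three bounds from membership in $\mathcal{N}$, the lower bound ${z^k}^Tw^k\geq\bar{\epsilon}$ from $\mathcal{N^*}$ in (\ref{3.4}), and the uniform estimate (\ref{bd1}). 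If anything, you are slightly more explicit than the paper in spelling out why the $(1-\alpha)$ term can be discarded.
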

	\begin{proof}
		$$\begin{array}{ll}
		g^{I}_{i}(\alpha) &= z_{i}^{k+1}w_{i}^{k+1} - \gamma \frac{{z^{k+1}}^{T}w^{k+1}}{n} \\
		&=z_{i}^{k}w_{i}^k + \alpha (z_i^k d_{w_{i}}^{k} + w_i^k d_{z_{i}}^{k}) + \alpha^{2} d^{k}_{z_{i}} d^{k}_{w_{i}} - \gamma \frac{{z^{k}}^T w^{k} + \alpha ({z^{k}}^{T} d^{k}_{w} + {w^{k}}^{T} d^{k}_{z}) + \alpha^{2} {d^{k}_{z}}^T d^{k}_{w}  } {n}\\
		&=z_{i}^{k}w_{i}^k - \alpha(z_{i}^{k}w_{i}^{k} - \sigma{z^k}^{T}w^k/n)+ \alpha^{2}d_{z_{i}}^{k}d_{w_{i}}^{k} - \gamma\frac{{z^k}^{T}w^k - \alpha(1 - \sigma){z^k}^{T}w^{k} + \alpha^{2}{d^{k}_{z}}^T d^{k}_{w}}{n}\\
		&=(1 - \alpha)z_{i}^{k}w_{i}^k + \alpha \sigma{z^k}^{T}w^k/n + \alpha^{2}d_{z_{i}}^{k}d_{w_{i}}^{k} - \gamma\frac{(1- \alpha){z^k}^{T}w^{k}+ \alpha\sigma{z^k}^{T}w^{k}+ \alpha^{2}{d^{k}_{z}}^T d^{k}_{w}}{n}  \\
		&=(1-\alpha)(z_{i}^{k}w_{i}^k - \gamma{z^k}^{T}w^{k}/n)+ \sigma (1 - \gamma)\alpha{z^k}^{T}w^k/n + \alpha^{2}(d_{z_{i}}^{k}d_{w_{i}}^{k} - \gamma {d_{z}^{k}}^Td_{w}^{k}/n)\\
		&\geq \sigma(1- \gamma)(\bar{\epsilon}/n)\alpha - \eta_{1} \alpha^{2} (\text{by} \, (\ref{3.4}), \, (\ref{bd1})).
		\end{array}$$
	\end{proof}
	\begin{lemma} \label{lemma 2}
		If $g^{II}(\alpha) = {z^{k+1}}^{T}w^{k+1} - \gamma \|w^{k+1} - Mz^{k+1} -q\|,$ then
		$g^{II}(\alpha) \geq \mu_{k}\alpha - \alpha^{2}\eta_{2}.$
	\end{lemma}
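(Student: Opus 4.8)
The plan is to mirror the proof of Lemma \ref{lemma 1}, using both parts of the preceding lemma together with membership of the current iterate $(z^k, w^k)$ in $\mathcal{N^*}$. First I would handle the residual term. By part $(i)$ of the preceding lemma, $-Mz^{k+1} + w^{k+1} - q = (1 - \alpha)(-Mz^k + w^k - q)$, so for a step length $\alpha \in [0,1]$ one has $\|w^{k+1} - Mz^{k+1} - q\| = (1 - \alpha)\|w^k - Mz^k - q\|$. This turns the updated residual appearing in $g^{II}(\alpha)$ into a scalar multiple of the residual at the current iterate.

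Next I would expand the inner product. Writing $z^{k+1} = z^k + \alpha d_z^k$ and $w^{k+1} = w^k + \alpha d_w^k$ gives
\[
{z^{k+1}}^T w^{k+1} = {z^k}^T w^k + \alpha\left({z^k}^T d_w^k + {w^k}^T d_z^k\right) + \alpha^2 {d_z^k}^T d_w^k,
\]
and substituting part $(ii)$, namely ${z^k}^T d_w^k + {w^k}^T d_z^k = -(1 - \sigma){z^k}^T w^k$, yields ${z^{k+1}}^T w^{k+1} = (1 - \alpha){z^k}^T w^k + \alpha\sigma{z^k}^T w^k + \alpha^2 {d_z^k}^T d_w^k$. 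Combining the two computations and collecting the $(1-\alpha)$ terms, I would arrive at
\[
g^{II}(\alpha) = (1 - \alpha)\left({z^k}^T w^k - \gamma\|w^k - Mz^k - q\|\right) + \alpha\sigma{z^k}^T w^k + \alpha^2 {d_z^k}^T d_w^k.
\]

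The main obstacle is showing that the leading bracket is nonnegative, since membership in $\mathcal{N}$ is stated as a disjunction: either ${z^k}^T w^k \geq \gamma^{'}\|w^k - Mz^k - q\|$ or $\|w^k - Mz^k - q\| \leq \epsilon$. In the first case the bracket is nonnegative outright. In the second case I would invoke the extra lower bound ${z^k}^T w^k \geq \bar{\epsilon}$ built into $\mathcal{N^*}$ in (\ref{3.4}); the definition $\bar{\epsilon} = \min\{\epsilon, \gamma^{'}\epsilon\}$ is tailored precisely so that, when $\|w^k - Mz^k - q\| \leq \epsilon$, one still obtains ${z^k}^T w^k - \gamma\|w^k - Mz^k - q\| \geq \bar{\epsilon} - \gamma\epsilon \geq 0$, and verifying this sign is the delicate point of the argument. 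Thus in either case the $(1-\alpha)$ term is nonnegative for $\alpha \in [0,1]$ and may be dropped, leaving $g^{II}(\alpha) \geq \alpha\sigma{z^k}^T w^k + \alpha^2 {d_z^k}^T d_w^k$. Finally, identifying $\sigma{z^k}^T w^k$ with $\mu_k$ exactly as in the companion lemma, and bounding $\alpha^2 {d_z^k}^T d_w^k \geq -\eta_2\alpha^2$ via (\ref{bd2}), gives $g^{II}(\alpha) \geq \mu_k\alpha - \eta_2\alpha^2$, as required.
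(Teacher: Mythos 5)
Your proposal is correct and follows the paper's own proof essentially step for step: use part (i) of the preceding lemma to write the new residual as $(1-\alpha)\|w^k - Mz^k - q\|$, expand ${z^{k+1}}^{T}w^{k+1}$ via part (ii), regroup into $(1-\alpha)\bigl({z^k}^{T}w^k - \gamma\|w^k - Mz^k - q\|\bigr) + \mu_k\alpha + \alpha^2 {d_z^k}^{T}d_w^k$, drop the nonnegative bracket, and bound the quadratic term by (\ref{bd2}). The only difference is that the paper drops the $(1-\alpha)$ bracket without comment, whereas your case analysis on the disjunction in $\mathcal{N}$ together with the $\bar{\epsilon}$ bound from (\ref{3.4}) spells out the justification the paper leaves implicit.
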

	\begin{proof}
		$$ \begin{array}{ll}
		g^{II}(\alpha) &= {z^{k+1}}^{T}w^{k+1} - \gamma \|w^{k+1} - Mz^{k+1} -q\|\\
		&=(z^{k} + \alpha d_{z}^{k})^T(w^k + \alpha d_{w}^{k}) - \gamma(1- \alpha)\|w^k - Mz^k -q\|\\
		&= {z^{k}}^{T}w^k + \alpha [-(1 - \sigma/n)]{z^{k}}^{T}w^k - \gamma(1- \alpha)\|w^k - Mz^k -q\| + \alpha^2 {d_{z}^{k}}^Td_{w}^{k}\\
		&=(1- \alpha)({z^k}^{T}w^k - \gamma \|w^k - Mz^k -q\|) + \mu_{k}\alpha + \alpha^{2}{d^{k}_{z}}^T d^{k}_{w}\\
		&\geq \mu_{k}\alpha - \alpha^{2}\|{d_{z}^{k}}^Td_{w}^{k}\|\\
		&\geq \mu_{k}\alpha - \alpha^{2}\eta_{2} (\text{by} \, (\ref{bd2})).
		\end{array}$$
	\end{proof}
	
	To find step length we have to find $\alpha_{k}$ to be the largest number $\in (0, 1]$ for which $g^{I}_{i}(\alpha) \geq 0$  and $g^{II}(\alpha) \geq 0.$ From Lemma (\ref{lemma 1}) and Lemma (\ref{lemma 2}) we have $g^{I}_{i}(\alpha) \geq 0, \ g^{II}(\alpha) \geq 0.$ Hence we can easily compute the lower bound of $\alpha_{k}$ by solving them for $\alpha.$ Hence letting $\alpha_k=$ $\min$$\{1, \sigma \bar{\epsilon}(1 - \gamma)/n\eta_1, \mu_k/ \eta_2 \}$, we obtain
	\begin{center}
		$g^{I}_{i}(\alpha) \geq 0$\\
		$g^{II}(\alpha) \geq 0$
	\end{center}
	hold for every $\alpha \in [0, \alpha_k]$ and $\forall i.$
	
	In \cite{simantiraki}, Simantiraki and Shanno state the idea to show the descent directions of the proposed algorithm. We prove the following theorem to show that the directions $(d_{z}^{k}, d_{w}^{k})$ generated by the algorithm is descent direction.
	\begin{theorem}
		The directions $(d_{z}^{k}, d_{w}^{k})$ generated at the $k$th iteration by the algorithm is a descent direction for the merit function $\phi(z^k, w^k).$  
	\end{theorem}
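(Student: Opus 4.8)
The plan is to compute the directional derivative of the merit function along the Newton direction and show it is strictly negative. First I would record that the merit function residual is the vector $\phi(z,w) = (w - Mz - q,\; ZWe)^T$, with $\|\phi(z,w)\| = \sqrt{\|w-Mz-q\|^2 + \|ZWe\|^2}$, and that it is equivalent to prove descent for the squared merit $\Phi(z,w) = \tfrac12\|\phi(z,w)\|^2$: since $\nabla\|\phi\| = J^T\phi/\|\phi\|$ while $\nabla\Phi = J^T\phi$, the two gradients point the same way (they differ only by the positive scalar $1/\|\phi\|$, which is well defined because $z^Tw \geq \bar\epsilon > 0$ on $\mathcal{N}^*$ forces $\phi \neq 0$). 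The crucial structural observation is that the Jacobian $J := \phi'(z,w) = \left[\begin{smallmatrix} -M & I\\ W & Z\end{smallmatrix}\right]$ is exactly the matrix $F'(z,w)$ of the Newton system, because $\phi$ and $F$ differ only by the constant block $(0,\mu e)^T$ in their second component.

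Next I would exploit the Newton equation itself. Since $J\,(d_z^k,d_w^k)^T = -F(z^k,w^k) = -\phi(z^k,w^k) + (0,\mu_k e)^T$, the gradient identity $\nabla\Phi = J^T\phi$ gives
\[ \nabla\Phi(z^k,w^k)^T (d_z^k,d_w^k) = \phi^T J\,(d_z^k,d_w^k)^T = -\|\phi\|^2 + \mu_k\,(Z^kW^ke)^T e = -\|\phi\|^2 + \mu_k\,{z^k}^T w^k. \]
This is the heart of the argument: the Newton direction decreases the squared residual to first order by essentially the full amount $\|\phi\|^2$, corrected only by the centering contribution $\mu_k\,{z^k}^Tw^k$.

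Finally I would bound that correction term. Using $\mu_k = \sigma\,{z^k}^Tw^k/n$ with $\sigma\in[0,1)$, together with the Cauchy--Schwarz (power-mean) estimate $\|Z^kW^ke\|^2 = \sum_i (z_i^k w_i^k)^2 \geq ({z^k}^Tw^k)^2/n$ and the trivial bound $\|\phi\|^2 \geq \|Z^kW^ke\|^2$, I obtain
\[ \nabla\Phi(z^k,w^k)^T(d_z^k,d_w^k) \leq -\frac{({z^k}^Tw^k)^2}{n} + \frac{\sigma\,({z^k}^Tw^k)^2}{n} = -(1-\sigma)\frac{({z^k}^Tw^k)^2}{n} < 0, \]
since $\sigma < 1$ and ${z^k}^Tw^k > 0$ for the strictly positive iterates. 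Hence $(d_z^k,d_w^k)$ is a descent direction for $\phi$, which justifies the Armijo line search in Step V. The main obstacle is purely bookkeeping: one must keep the shifted Newton residual $F$ (which carries the $-\mu_k e$ term) rigorously distinct from the merit residual $\phi$ (which does not), so that the inner product $\phi^T J d$ produces precisely the single extra term $\mu_k\,{z^k}^Tw^k$ and nothing else; once that cancellation is tracked correctly, the remainder is the one-line Cauchy--Schwarz estimate above.
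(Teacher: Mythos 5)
Your proposal is correct and takes essentially the same route as the paper: both evaluate the directional derivative along the Newton direction, reducing it via the Newton system to $-\|\phi\|^{2}+\mu_k\,{z^k}^{T}w^k$ and then bounding $\mu_k\,{z^k}^{T}w^k\leq\sigma\|\phi\|^{2}$ through the same Cauchy--Schwarz estimate $({z^k}^{T}w^k)^{2}/n\leq\|Z^kW^ke\|^{2}$, concluding from $\sigma<1$. The only differences are cosmetic---you differentiate the squared merit $\Phi=\tfrac12\|\phi\|^{2}$ while the paper differentiates $\|\phi\|$ directly (the gradients differ by the positive factor $1/\|\phi\|$)---and your explicit bookkeeping separating the shifted Newton residual $F$ (carrying $-\mu_k e$) from the merit residual is in fact tidier than the paper's writeup, which silently conflates the two in computing $F(p^k)^{T}\bar{e}={z^k}^{T}w^k$ and misprints $F'(p^k)^{-1}$ as $F'(p^k)$ in the direction formula.
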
	
	\begin{proof}
		Consider $p^k = (z^k, w^k),$ then
		\begin{center}
			$d_{p}^k = \mbox{F}'(p^k)[-F(p^k) + \mu_k \bar{e}]$\\
			$\nabla\phi(p^k) = \frac{1}{\|F(p^k)\|} \mbox{F}'(p^k)^T F(p^k),$
		\end{center}
		where $\bar{e}$ is the vector $(0, e)^T.$ Hence
		$$\begin{array}{ll}
		\nabla \phi(p^k) d_{p}^{k} &= \frac{1}{\|\mbox{F}(p^k)\|} \mbox{F}(p^k)^T \mbox{F}^\prime(p^k)\mbox{F}'(p^k)^{-1}[-F(p^k) + \mu_k \bar{e}]\\
		&=\frac{1}{\|\mbox{F}(p^k)\|}(-\|\mbox{F}(p^k)\|^2 + \mu_k{z^{k}}^Tw^k)\\
		&=-(\|\mbox{F}(p^k)\| - \mu_k\frac{{z^k}^Tw^k}{\|\mbox{F}(p^k)\|})\\
		&=-(\phi(p^k) - \mu_k\frac{{z^k}^Tw^k}{\phi(p^k)})
		\end{array}$$ 
		Again we have $\mu_k = \sigma \frac{{z^k}^Tw^k}{n}$
		$$\begin{array}{ll}
		\mu_k{z^k}^Tw^k &= \sigma\frac{({z^k}^Tw^k)^2}{n}\\
		&\leq \sigma\|Z^kW^ke\|^2\\
		&\leq \sigma\phi(p^k)^2 
		\end{array}$$ 
		Hence
		$$\begin{array}{ll}
		\nabla\phi(p^k)^Td^{k}_{p}&\leq -\phi(p^k)(1-\sigma)\\
		&\leq 0.
		\end{array}$$ 
		Hence $(d_{z}^{k}, d_{w}^{k})$ is the decent direction for the algorithm. 
	\end{proof}
	Now we prove the convergence result of the proposed algorithm.
	\begin{theorem}
		Let the sequence $p^k = \{z^k, w^k\}$ be generated by the algorithm. Then for any $\{\frac{n \mu_k}{{z^k}^Tw^k}\} \subset (0, 1)$ bounded away from zero, $\phi(p^k)$ converges to zero.  
	\end{theorem}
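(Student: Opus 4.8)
The plan is to combine the descent property established in the preceding theorem with the Armijo-type line search imposed in Step V, and then to exploit the uniform positive lower bound on the step lengths that the neighbourhood $\mathcal{N}^*$ supplies. Write $\phi_k=\phi(p^k)$ and $\sigma_k=n\mu_k/({z^k}^Tw^k)$; by hypothesis there are constants with $0<\sigma_{\min}\le\sigma_k\le\sigma_{\max}<1$. First I would record the one-step decrease: the line search in Step V enforces $\phi(p^{k+1})-\phi(p^k)\le\alpha_k\beta\,\nabla\phi(p^k)^Td_p^k$, while the descent theorem gives $\nabla\phi(p^k)^Td_p^k\le-(1-\sigma_k)\phi_k$. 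Combining these,
\[
\phi_{k+1}\le\big(1-\alpha_k\beta(1-\sigma_k)\big)\phi_k\le\phi_k ,
\]
since every factor is nonnegative. Hence $\{\phi_k\}$ is nonincreasing and bounded below by $0$, so $\phi_k\downarrow\phi^*\ge0$, and it remains only to prove $\phi^*=0$.

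Next I would telescope the decrease inequality. Rewriting it as $\alpha_k\beta(1-\sigma_k)\phi_k\le\phi_k-\phi_{k+1}$ and summing,
\[
\beta(1-\sigma_{\max})\sum_{k=0}^{\infty}\alpha_k\phi_k\le\sum_{k=0}^{\infty}\alpha_k\beta(1-\sigma_k)\phi_k\le\phi_0-\phi^*<\infty .
\]
Because $\beta(1-\sigma_{\max})>0$, this yields $\sum_{k}\alpha_k\phi_k<\infty$ and in particular $\alpha_k\phi_k\to0$.

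The crucial step, and the main obstacle, is to bound the step lengths away from zero. Since the iterates remain in the compact neighbourhood $\mathcal{N}^*$ of (\ref{3.4}), the Newton direction is uniformly bounded there, so (\ref{bd1})--(\ref{bd2}) hold with \emph{fixed} constants $\eta_1,\eta_2$; moreover $\mathcal{N}^*$ forces ${z^k}^Tw^k\ge\bar\epsilon$, whence $\mu_k=\sigma_k{z^k}^Tw^k/n\ge\sigma_{\min}\bar\epsilon/n$. Feeding these into the explicit step-length rule $\alpha_k=\min\{1,\ \sigma_k\bar\epsilon(1-\gamma)/(n\eta_1),\ \mu_k/\eta_2\}$ obtained from Lemmas \ref{lemma 1}--\ref{lemma 2}, I would extract the uniform bound
\[
\alpha_k\ge\alpha_{\min}:=\min\Big\{1,\ \tfrac{\sigma_{\min}\bar\epsilon(1-\gamma)}{n\eta_1},\ \tfrac{\sigma_{\min}\bar\epsilon}{n\eta_2}\Big\}>0 .
\]
The delicate point is precisely the lower bound ${z^k}^Tw^k\ge\bar\epsilon$: it is what keeps $\mu_k$, and hence $\alpha_k$, from collapsing, and it is legitimate only so long as the iterates genuinely stay in $\mathcal{N}^*$, which is where the neighbourhood conditions $z_iw_i\ge\gamma\,z^Tw/n$ and $z^Tw\ge\gamma'\|w-Mz-q\|$ (or $\|w-Mz-q\|\le\epsilon$) are doing their work. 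I expect the real labour to lie in verifying that Step V can always meet both the $\mathcal{N}$-membership and the sufficient-decrease requirement simultaneously with a step no smaller than $\alpha_{\min}$.

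Finally I would conclude. With $\alpha_k\ge\alpha_{\min}>0$ the limit $\alpha_k\phi_k\to0$ forces $\phi_k\to0$, i.e. $\phi^*=0$; equivalently, arguing by contradiction, if $\phi^*>0$ then $\alpha_k\phi_k\ge\alpha_{\min}\phi^*>0$ for all $k$, so $\sum_k\alpha_k\phi_k=\infty$, contradicting the summability above. Since $\phi(p^k)^2=\|w^k-Mz^k-q\|^2+\|Z^kW^ke\|^2$, the vanishing of $\phi$ delivers both feasibility $w^k-Mz^k-q\to0$ and complementarity $Z^kW^ke\to0$, so $p^k$ approaches a solution of LCP$(q,M)$, equivalently of the generalized Leontief model.
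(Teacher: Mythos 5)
Your proposal is correct and follows essentially the same route as the paper: combine the Step V Armijo condition with the descent bound $\nabla\phi(p^k)^T d_p^k \le -(1-\sigma)\phi(p^k)$ to obtain $\phi(p^{k+1}) \le [1-\alpha_k\beta(1-\sigma)]\phi(p^k)$, then use step lengths bounded away from zero to force $\phi(p^k)\to 0$. Your write-up is in fact tighter than the paper's: where the paper merely asserts that the $\alpha_k$ are bounded away from zero, you justify this from the rule $\alpha_k=\min\{1,\ \sigma\bar{\epsilon}(1-\gamma)/(n\eta_1),\ \mu_k/\eta_2\}$ together with the bound ${z^k}^T w^k \ge \bar{\epsilon}$ on $\mathcal{N}^*$ (honestly flagging, as the paper does not, that one must still verify Step V admits such a step), and your telescoping-summability finish is a cleaner substitute for the paper's somewhat garbled contraction-ratio conclusion.
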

	\begin{proof} We have
		$$\begin{array}{ll}
		\phi(p^{k+1}) &= \alpha_{k} \beta \nabla \phi(p^k)^{T}d_p^k\\
		& \leq \phi(p^k) - \alpha_k \beta (1- \sigma)\phi(p^k)\\
		& \leq [1 - \alpha_k \beta (1 - \sigma)]\phi(p^k).
		\end{array}$$ 
		Hence,
		\begin{center}
			$\frac{\phi(p^{k+1})}{\phi(p^k)} \leq 1 - \alpha_k \beta (1 - \sigma).$
		\end{center}
		Now $\alpha_k'$s are bounded away from zero. Hence corresponding subsequences converges to zero. So letting $k \raro \infty,$ we get
		\begin{center}
			$\frac{\phi(p^{k+1})}{\phi(p^k)} < 1.$
		\end{center}
		Hence $\{\phi(p^{k})\}$ converges to zero.
	\end{proof}
	\section{Numerical illustration}
	We consider the example given by Ebiefung et al. \cite{ebiefung} where they consider an economy with three sectors. The outputs of the sectors are pair of shoes, food and light bulbs. The technology matrix and demands for the economy are
	\begin{center}
		$A$ =  $\left[\begin{array}{rrr}
		0.6 & 0.1 & 0.3 \\
		0.3 & 0.6 & 0.1 \\
		0.1 & 0.3 & 0.6
		\end{array}\right],$
		\quad
		$h^I$ =  $\left[\begin{array}{r}
		150 \\
		-500\\
		-20
		\end{array}\right].$	
	\end{center}
	Here the columns and rows are given in the order of shoes, food and light bulbs in that order. Consider a new technology for each of these sectors has been introduced in the market. The new technology matrix $B$ and demand vector $h^{II}$ are
	\begin{center}
		$A$ =  $\left[\begin{array}{rrr}
		0.5 & 0.2 & 0.3 \\
		0.4 & 0.2 & 0.4 \\
		0.1 & 0.6 & 0.3
		\end{array}\right],$
		\quad
		$h^{II}$ =  $\left[\begin{array}{r}
		150 \\
		-500\\
		-20
		\end{array}\right].$	
	\end{center}
	Now formulating this problem under as generalized Leontief input-output model, we can get a vertical block matrix $N$ of type $(2, 2, 2)$ and the demand vector $b,$
	\begin{center}
		$A$ =  $\left[\begin{array}{rrr}
		0.4 & -0.1 & -0.3 \\
		0.5 & -0.2 & -0.3 \\
		-0.3 & 0.4 & -0.1 \\
		-0.4 & 0.8 & -0.4\\
		-0.1 & -0.3 & 0.4\\
		-0.1& -0.6 & 0.7
		\end{array}\right],$
		\quad
		$b$ =  $\left[\begin{array}{r}
		150 \\
		150\\
		-500\\
		-500\\
		-20\\
		-20
		\end{array}\right].$	
	\end{center}
	
	\begin{table}[!]
		\centering
		\resizebox{\columnwidth}{!}{%
			\begin{tabular}{|c|c|c|c|c|c|c|}
				\hline
				Iteration (k) & $z^k$ & $w^k$ &  $d_z^k$ & $d_w^k$ & $\mu$ & $\|\phi(z^k, w^k)\|$\\
				\hline
				
				1 & $\left( \begin{array}{rrr}  35.925 \\ 30.340 \\ -2.034 \\ 12.449 \\ 24.486 \\ 15.5888 \end{array} \right)$ & $\left( \begin{array}{rrr} 23.746 \\ 29.331 \\ 61.705 \\ 47.222 \\ 35.184 \\ 44.083 \end{array} \right)$& $\left( \begin{array}{rrr} 54.151 \\ 3.108 \\ -292.765 \\ -160.400 \\ -50.386 \\ -131.707 \end{array} \right)$ & $\left( \begin{array}{rrr} -57.151 \\ -6.108 \\ -289.765 \\ 157.400 \\ 47.386 \\ 128.707 \end{array} \right)$ & 810 & 2316.635\\
				\hline

				2 & $\left( \begin{array}{rrr}  59.865 \\ 41.275 \\ 2.604 \\ -4.277 \\ 24.253 \\ 14.622 \end{array} \right)$ & $\left( \begin{array}{rrr} 6.260 \\ 16.541 \\ 132.708 \\ 110.261 \\ 33.010 \\ 45.175 \end{array} \right)$& $\left( \begin{array}{rrr} 116.275 \\ 53.114 \\ 22.527 \\ -81.240 \\ -1.132 \\ -4.690 \end{array} \right)$ & $\left( \begin{array}{rrr} -84.930 \\ -62.119 \\ 344.853 \\ 306.176 \\ -10.560 \\ 5.306 \end{array} \right)$ & 563.12 & 1870.186\\
				
				\hline

				3 & $\left( \begin{array}{rrr}  154.003 \\ 37.663 \\ 0.874 \\ 1.495 \\ 27.613 \\ 15.783 \end{array} \right)$ & $\left( \begin{array}{rrr} -3.676 \\ 15.253 \\ 222.613 \\ 191.375 \\ 22.087 \\ 34.395 \end{array} \right)$& $\left( \begin{array}{rrr} 269.986 \\ -10.359 \\ -4.959 \\ 16.557 \\ 9.636 \\ 3.329 \end{array} \right)$ & $\left( \begin{array}{rrr} -28.497 \\ -3.694 \\ 257.846 \\ 232.633 \\ -31.328 \\ -30.917 \end{array} \right)$ & 358.90 & 1509.764\\
				\hline
				
				4 & $\left( \begin{array}{rrr}  283.651 \\ -3.8924 \\ 0.765 \\ 1.126 \\ 32.314 \\ 15.883 \end{array} \right)$ & $\left( \begin{array}{rrr} 1.258 \\ 29.044 \\ 271.001 \\ 229.323 \\ 13.735 \\ 27.627 \end{array} \right)$& $\left( \begin{array}{rrr} 371.827 \\ -119.182 \\ -0.313  \\ -1.0591 \\ 13.4831 \\ 0.2845 \end{array} \right)$ & $\left( \begin{array}{rrr} 14.152 \\ 39.553 \\ 138.775 \\ 108.831 \\ -23.951 \\ -19.409 \end{array} \right)$ & 246.31 & 1242.866 \\
				\hline

				5 & $\left( \begin{array}{rrr}  410.891 \\ 9.991 \\ 0.606 \\ 0.651 \\ 46.250 \\ 13.582 \end{array} \right)$ & $\left( \begin{array}{rrr} 0.277 \\ 42.240 \\ 368.255 \\ 308.720 \\ 1.467 \\ 19.039 \end{array} \right)$& $\left( \begin{array}{rrr} 127.239 \\ 13.883 \\ -0.158  \\ -0.474 \\ 13.935 \\ -2.301 \end{array} \right)$ & $\left( \begin{array}{rrr} -0.980 \\ 13.195 \\ 97.253 \\ 79.397 \\ -12.268 \\ -8.587 \end{array} \right)$ & 238.837 & 827.699 \\
				\hline
				
				\vdots  & \vdots & \vdots & \vdots & \vdots & \vdots & \vdots \\
				\hline

				49 & $\left( \begin{array}{rrr}  415.45 \\ 0.0504 \\ 0.0056 \\ 0.0067 \\ 53.8 \\ 0.129 \end{array} \right)$ & $\left( \begin{array}{rrr} 0.00504 \\ 41.5 \\ 369.95 \\ 312.21 \\ 0.0388 \\ 16.22 \end{array} \right)$& $\left( \begin{array}{rrr} -0.0074 \\ -0.0055 \\ -0.00062  \\ -0.00074  \\ -0.00083 \\ -0.0142 \end{array} \right)$ & $\left( \begin{array}{rrr} -0.00056 \\ -0.00172 \\ 0.00488 \\ 0.01016 \\ -0.00432 \\ -0.00843 \end{array} \right)$ & 1.8853 & 5.1311 \\
				\hline
				
			\end{tabular}%
		}
	\end{table}
	
	\begin{table}[h]
		\centering
		\resizebox{\columnwidth}{!}{%
			\begin{tabular}{|c|c|c|c|c|c|c|}
				\hline
				Iteration (k) & $z^k$ & $w^k$ &  $d_z^k$ & $d_w^k$ & $\mu$ & $\|\phi(z^k, w^k)\|$\\
				\hline
				
				50 & $\left( \begin{array}{rrr}  415.44 \\ 0.0453 \\ 0.00509 \\ 0.00603 \\ 53.85 \\ 0.116 \end{array} \right)$ & $\left( \begin{array}{rrr} 0.00453 \\ 41.55 \\ 369.96 \\ 312.22 \\ 0.0350 \\ 16.222 \end{array} \right)$& $\left( \begin{array}{rrr} -0.0067 \\ -0.0050 \\ -0.0005  \\ -0.00067  \\ -0.0007 \\ -0.0128 \end{array} \right)$ & $\left( \begin{array}{rrr} -0.0005 \\ -0.00155 \\ 0.00439 \\ 0.00915 \\ -0.00388 \\ -0.00759 \end{array} \right)$ & 1.8853 & 5.1311 \\
				\hline

				\vdots  & \vdots & \vdots & \vdots & \vdots & \vdots & \vdots \\
				
				\hline
				
				99 & $\left( \begin{array}{rrr}  415.38 \\ 0.000001 \\ 0.000000 \\ 0.000000 \\ 53.84 \\ 0.000003 \end{array} \right)$ & $\left( \begin{array}{rrr} 0.000000 \\ 41.53 \\ 370 \\ 312.3 \\ 0.000001 \\ 16.15 \end{array} \right)$& $\left( \begin{array}{rrr} -0.000000 \\ -0.000000 \\ -0.000000  \\ -0.000000  \\ -0.000000 \\ -0.000000 \end{array} \right)$ & $\left( \begin{array}{rrr} -0.000000 \\ -0.000000 \\ 0.000000 \\ 0.000000 \\ -0.000000 \\ -0.000000 \end{array} \right)$ & 0.000055 & 0.000151 \\
				\hline
				
				
				
			\end{tabular}%
		}
		\caption{Summary of computation for the proposed algorithm} \label{t1}
	\end{table}
	\pagebreak
	The algorithm runs on a HP PC with intel Core i5
	processor 3.10 GHz 4 GB of RAM. The proposed algorithm converges to the solution $[415, 0, 54]^T$ after $99$ iterations and time taken to reach the optimal solution is $1.12 \times 10^{-1}$s. 
	
	\NI{\bf Acknowledgment:} The author R. Jana is thankful to the Department of Science and Technology, Govt. of India, INSPIRE Fellowship Scheme for financial support. 
	
	\bibliographystyle{plain}
	\bibliography{bibfile}

\begin{thebibliography}{10}

\bibitem{Barker}
Terence~S Barker.
\newblock {\em Foreign trade in multisectoral models}.
\newblock University of Cambridge, Department of Applied Economics, 1973.

\bibitem{berman}
Abraham Berman and Robert~J Plemmons.
\newblock {\em Nonnegative matrices in the mathematical sciences}, volume~9.
\newblock Siam, 1994.

\bibitem{cd}
Richard~W Cottle and George~B Dantzig.
\newblock A generalization of the linear complementarity problem.
\newblock {\em Journal of Combinatorial Theory}, 8(1):79--90, 1970.

\bibitem{Dantzig}
George~B Dantzig.
\newblock Optimal solution of a dynamic leontief model with substitution.
\newblock {\em Econometrica: Journal of the Econometric Society}, pages
  295--302, 1955.

\bibitem{icmc}
AK~Das, R~Jana, et~al.
\newblock Finiteness of criss-cross method in complementarity problem.
\newblock In {\em International Conference on Mathematics and Computing}, pages
  170--180. Springer, 2017.

\bibitem{ebiefung}
Aniekan~A Ebiefung and Michael~M Kostreva.
\newblock The generalized leontief input-output model and its application to
  the choice of new technology.
\newblock {\em Annals of Operations Research}, 44(2):161--172, 1993.

\bibitem{kojima}
Masakazu Kojima, Nimrod Megiddo, and Shinji Mizuno.
\newblock A primal—dual infeasible-interior-point algorithm for linear
  programming.
\newblock {\em Mathematical programming}, 61(1-3):263--280, 1993.

\bibitem{Kojimainterior}
Masakazu Kojima, Nimrod Megiddo, Toshihito Noma, and Akiko Yoshise.
\newblock A unified approach to interior point algorithms for linear
  complementarity problems: A summary.
\newblock {\em Operations Research Letters}, 10(5):247--254, 1991.

\bibitem{leontief1}
Wassily Leontief.
\newblock {\em Input-output economics}.
\newblock Oxford University Press, 1986.

\bibitem{MNS}
SR~Mohan, SK~Neogy, and R~Sridhar.
\newblock The generalized linear complementarity problem revisited.
\newblock {\em Mathematical Programming}, 74(2):197, 1996.

\bibitem{variants}
Jan Oosterhaven.
\newblock On the plausibility of the supply-driven input-output model.
\newblock {\em Journal of Regional Science}, 28(2):203--217, 1988.

\bibitem{simantiraki}
Evangelia~M Simantiraki and David~F Shanno.
\newblock An infeasible-interior-point method for linear complementarity
  problems.
\newblock {\em SIAM Journal on Optimization}, 7(3):620--640, 1997.

\bibitem{veinott}
Arthur~F Veinott~Jr.
\newblock Minimum concave-cost solution of leontief substitution models of
  multi-facility inventory systems.
\newblock {\em Operations Research}, 17(2):262--291, 1969.

\bibitem{regionaleco}
Thomas Wiedmann.
\newblock A review of recent multi-region input--output models used for
  consumption-based emission and resource accounting.
\newblock {\em Ecological Economics}, 69(2):211--222, 2009.

\end{thebibliography}
\end{document}